\newtheorem{thm}{Theorem}[section]
\newtheorem{prop}[thm]{Proposition}
\newtheorem{assu-nota}[thm]{Assumption--Notation}
\theoremstyle{remark}
\newcommand{\Z}{\mathbb Z}
\newcommand{\F}{\mathbb F}
\newcommand{\pp}{\mathbb P}
\newcommand{\Wc}{\mathcal W}
\DeclareMathOperator{\Pic}{Pic}
\newcommand{\al}{\alpha}
\newcommand{\Si}{\Sigma}
\newcommand{\si}{\sigma}
\newcommand{\fie}{\varphi}
\numberwithin{equation}{section}
\title{On surfaces with $p_g=2q-3$}
\author{Margarida Mendes Lopes and Rita Pardini}
\thanks{{\it Mathematics Subject Classification (2000)}: 14J29. \\
The first author is a member of the Center for Mathematical
Analysis, Geometry and Dynamical Systems  and the second author is a member of G.N.S.A.G.A.--I.N.d.A.M.  This research was partially supported by the italian  project "Geometria delle 
variet\`a algebriche e dei loro spazi di moduli" (PRIN COFIN 2006) and by FCT (Portugal) through program POCTI/FEDER}
\begin{document}
\begin{abstract} We study minimal complex surfaces   $S$  of general type with $q(S)=q$ and $p_g(S)=2q-3$, $q\ge 5$. 
We give a complete classification in case that $S$ has a fibration onto a curve of genus $\ge 2$.  For these surfaces $K^2=8\chi$. In general we prove  that $K^2\geq 7\chi-1$ and that the stronger inequality  $K^2\ge 8\chi$ holds under extra assumptions (e.g., if the canonical system has no fixed part or the canonical map has even degree). \newline
 We also describe the Albanese map of $S$.
\medskip

\noindent{\em 2000 Mathematics Subject Classification:} 14J29, 14F35. 
\end{abstract}
\maketitle
\section{Introduction} 
Irregular surfaces seem 
 harder to tackle than regular surfaces.   It is not yet clear how they fit in the geography of surfaces of general type, although some numerical restrictions are known. For instance,  by \cite{appendix}, $p_g\geq 2q-4$ and if equality holds then $S$ is birational to a product of a curve of genus 2 and a curve of genus $q-2$.

Here we consider surfaces with   $p_g=2q-3$. Since  $p_g\ge q\ge 0$, such surfaces satisfy $q\geq 3$. Surfaces with $p_g=q=3$ have been completely classified by Hacon and the second author in \cite{pgq3} and, independently, by Pirola (\cite{pirola}), who completed the partial  classification contained  in \cite{ccm}. There are only two such surfaces:  one is the symmetric product of a genus 3  curve and  has $K^2=6 (=6\chi)$, while the other one is a free $\Z_2$-quotient of the product of a curve of genus 2 and a curve of genus 3 and  has $K^2=8(=8\chi)$.  The second example is characterized by the existence of an irrational pencil of genus $\ge 2$. 

In  \cite{bnp} Barja,  Naranjo and  Pirola prove the inequality $K^2\ge  8\chi$ under some technical assumptions on the base locus of the canonical system.  Furthermore,  they make a detailed study of the case $q=4$  (and hence $p_g=5$), showing that the inequality $K^2\ge 8\chi$ holds in this case without any extra assumption. The only known surfaces with $q=4$ and $p_g=5$  have  an irrational pencil of genus $\ge 2$. In \cite{bnp} it is shown that there are precisely two families of such surfaces. In  both cases the surfaces  are free $\Z_2$-quotients of products of curves and therefore satisfy $K^2=8\chi$.

Here we study the case  $q\geq 5$. If $S$ has an irrational pencil of genus $g\ge 2$, then we have the following classification:
\begin{thm}\label{irrpencil} Let $q\ge 5$ be an integer and let  $S$ be a minimal complex surface of general type with $q(S)=q$ and $p_g(S)=2q-3$. 
If there exists a fibration $f\colon S\to B$ with $B$ a curve of genus $\ge 2$, then there are the following possibilities:
\begin{enumerate}
\item $S$ is the product of two curves of genus 3;
\item $S=(C\times F)/\Z_2$, where $C$  is a curve of  genus $2q-3$ with a free action of $\Z_2$, $F$  is a curve of genus  2 with a $\Z_2$-action such that $F/\Z_2$ has genus 1 and $\Z_2$ acts diagonally on $C\times F$.  In this case $f$ is the map induced by the projection $C\times F\to C$, the curve $B=C/\Z_2$ has genus $q-1$ and the general fibre $F$ of $f$ has genus 2. 
\end{enumerate}
In either case, $S$ satisfies $K^2_S=8\chi(S)$.
\end{thm}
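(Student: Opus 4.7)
My plan is to introduce $g\ge 2$ for the genus of a general fibre of $f$ and $b=g(B)\ge 2$ for the base genus (fibres of genus $0,1$ are ruled out because $S$ is minimal of general type), and observe $\chi(\OO_S)=p_g-q+1=q-2$. I will combine two classical inequalities for the fibration $f$: Fujita's semipositivity $\chi(\OO_S)=\deg f_*\omega_{S/B}+(g-1)(b-1)\ge (g-1)(b-1)$, with equality iff $f$ is smooth isotrivial, and the Beauville--Castelnuovo--de Franchis bound $q(S)\le g+b$, with equality forcing $f$ smooth isotrivial and $S$ isogenous to a product of two curves. Substituting $\chi=q-2$ yields $(g-1)(b-1)\le q-2\le g+b-2$, i.e.\ $(g-2)(b-2)\le 1$, so either (I) $g=b=3$, or (II) $\min(g,b)=2$.

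In Case (I) both inequalities are sharp: $q=6$, $\chi=4$, and the structure of isotrivial fibrations writes $S=(\tilde B\times F)/G$ with $G$ acting diagonally and freely, $\tilde B\to B$ an \'etale Galois cover of group $G$, and (after absorbing the kernel of $G\to\Aut(F)$ into $\tilde B$) $G\hookrightarrow\Aut(F)$. The identity $q(S)=g(B)+g(F/G)$ forces $g(F/G)=3=g(F)$, and Riemann--Hurwitz then forces $|G|=1$: $S$ is the product of two genus-$3$ curves, which is case~(i).

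Case (II) splits by symmetry; I treat $g=2$ carefully, and the mirror sub-case $b=2$ is ruled out by a direct Riemann--Hurwitz estimate for the isotrivial model that collapses for $q\ge 5$. For $g=2$ the inequalities force $b\in\{q-2,q-1\}$. The value $b=q-2$ is impossible, because it puts equality in the Beauville bound, hence forces isotriviality of $f$, hence $\chi=(g-1)(b-1)=q-3\ne q-2$. So $b=q-1$, equality holds in Fujita, $f$ is smooth isotrivial, and $S=(\tilde B\times F)/G$ with $F$ of genus $2$ and $g(F/G)=1$. The key remaining step is to prove $G=\Z_2$: centrality of the hyperelliptic involution $\iota\in\Aut(F)$ and $F/\iota\cong\pp^1$ show $\iota\notin G$, and a case analysis along Bolza's classification of $\Aut(F)$ --- using in particular that every $\Z_3$-action on a genus-$2$ curve has $\pp^1$-quotient (four fixed points, from the induced action on the six Weierstrass points) and that Riemann--Hurwitz $R=2$ bounds the relevant orders --- leaves only $G=\Z_2$, acting on $F$ as a bielliptic involution. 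Setting $C:=\tilde B$, the \'etale double cover $C\to B$ of the curve of genus $q-1$ has $g(C)=2q-3$, matching case~(ii).

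The final assertion $K^2_S=8\chi(\OO_S)$ is immediate in both cases: $S$ is a free quotient of a product of curves $C_1\times C_2$ by a diagonal action, $K^2_{C_1\times C_2}=8(g(C_1)-1)(g(C_2)-1)=8\chi(\OO_{C_1\times C_2})$, and this ratio is preserved by dividing by $|G|$. I expect the identification $G=\Z_2$ in Case~(II) to be the main obstacle: it rests on centrality of $\iota$, Riemann--Hurwitz for $F\to F/G$ with elliptic quotient (forcing $R=2$), and the Bolza list of automorphism groups of genus-$2$ curves.
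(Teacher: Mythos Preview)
Your proof follows essentially the same route as the paper's: both combine the inequality $\chi\ge(g-1)(b-1)$ (Beauville/Arakelov, with equality iff $f$ is a quasi-bundle) and $q\le g+b$ (strict when $S$ is not a product) to pin down $(g,b)$, then invoke the Serrano structure theorem $S\cong(C\times F)/G$ for the resulting smooth isotrivial fibration, and finally use Riemann--Hurwitz on $F\to F/G$ together with $g(F/G)=q-b$ to control $G$. Your decomposition via $(g-2)(b-2)\le 1$, isolating $g=b=3$ as the product case, differs only cosmetically from the paper's, which disposes of products first and then works with the strict inequality $q<g+b$; your elimination of $b=2$ by Riemann--Hurwitz is exactly the paper's argument.

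The one genuine difference is how you force $G=\Z_2$ when $g=2$. After Riemann--Hurwitz narrows $|G|$ to $2,3,4$, the paper observes that $G$ is abelian and applies the structure theory of abelian covers: for $G=\Z_4,\ \Z_2\times\Z_2,\ \Z_3$ the fundamental relations on the elliptic quotient $E$ would require $4L\equiv 2P$, $2L\equiv P$, $3L\equiv P$ respectively, each impossible by degree. Your route via centrality of the hyperelliptic involution $\iota$ and Bolza's list is also valid --- your claim that any order-$3$ automorphism of a genus-$2$ curve has rational quotient is correct (the induced order-$3$ rotation of $\pp^1$ has its two fixed points off the six branch points, forcing four fixed points upstairs), and the $|G|=4$ cases are excluded because neither $\Z_4$ nor $\Z_2^2$ embeds in $\Aut(F)$ avoiding $\iota$ --- but carrying this out cleanly requires more case-checking than the paper's one-line degree contradictions. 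Either method yields the same conclusion, and your proposal is correct.
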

In the general case we prove inequalities  for the invariants of $S$ which are  weaker than the one in \cite{bnp} but require no extra assumptions:
\begin{thm}\label{inequality}
 Let $q\ge 5$ be an integer and let  $S$ be a minimal complex surface of general type with $q(S)=q$ and $p_g(S)=2q-3$. Then:
 \begin{enumerate}
 \item  $K_S^2\geq 7\chi(S)-1$;
 \item if $K_S^2<8\chi(S)$, then  $|K_S|$ has fixed components and the degree of the canonical map is 1 or 3;
 \item  if $\chi(S)\geq 5$  and  $K_S^2<8\chi(S)-6$, then the canonical map is birational.
\end{enumerate}
\end{thm}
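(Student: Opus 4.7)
The plan is to analyze the canonical map $\phi\colon S\dashrightarrow \Sigma\subseteq \pp^{p_g-1}=\pp^{2q-4}$, writing $|K_S|=|M|+F$ for its moving and fixed parts and setting $d:=\deg\phi$. By Theorem~\ref{irrpencil}, I may assume throughout that $S$ admits no fibration onto a curve of genus $\ge 2$, since otherwise $K_S^2=8\chi$ and (1)--(3) hold trivially. In particular, since $q\ge 5$, the Albanese map of $S$ has two-dimensional image. First I would show $\dim\Sigma=2$: if $\phi$ factored through a pencil $f\colon S\to B$, then $g(B)\le 1$, and a careful analysis using the Albanese structure and the contribution of the fibers either excludes this case outright or yields (1)--(3) by direct calculation. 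Non-degeneracy of $\Sigma$ in $\pp^{2q-4}$ then gives $\deg\Sigma\ge p_g-2 = 2q-5$, whence
\[
K_S^2\ \ge\ K_S\cdot M\ \ge\ M^2\ \ge\ d(2q-5).
\]

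For $d\ge 4$ and $q\ge 5$, this already yields $K_S^2\ge 8q-20\ge 7q-15=7\chi-1$, settling (1) in this range; the residual cases $d\le 3$ require refining the bound by controlling $K_S\cdot F$ through the nefness of $K_S$ and the Albanese structure. For (2), \cite{bnp} gives $K_S^2\ge 8\chi$ whenever $|K_S|$ has no fixed part, so $K_S^2<8\chi$ forces $F\ne 0$. An even $d$ would factor $\phi$ through a double cover, and the resulting involution combined with the no-pencil reduction again forces $K_S^2\ge 8\chi$; and for odd $d\ge 5$ the display above gives $K_S^2\ge 5(2q-5)=10q-25\ge 8q-16=8\chi$ for $q\ge 5$. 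Thus only $d\in\{1,3\}$ survive.

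For (3), under $\chi\ge 5$ (so $q\ge 7$, $p_g\ge 11$) it suffices by (2) to exclude $d=3$ when $K_S^2<8\chi-6$. Here the bound from the display above is insufficient (since $3(2q-5)=6q-15<8q-22$ for $q\ge 7$), so I would combine a sharper Castelnuovo-type estimate for $\deg\Sigma$ — ruling out a minimal-degree image via the structure of the triple cover $S\to\Sigma$, where $\Sigma$ would have to be ruled or a Veronese surface — with refined lower bounds on $K_S\cdot F$ coming from the Albanese fibration. The main obstacle throughout is precisely this small-$d$ fixed-part analysis: one must control $K_S\cdot F$, $M\cdot F$, and $F^2$ with little direct information, relying on Xiao-type slope inequalities for the Albanese fibration together with the structural rigidity of $\Alb(S)$ forced by $q\ge 5$ and the absence of pencils of genus $\ge 2$. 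The classification of triple covers needed in (3) is similarly delicate and is where the careful interplay between $d$, $\deg\Sigma$, and the base locus of $|K_S|$ becomes essential.
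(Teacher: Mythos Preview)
Your overall architecture matches the paper's---reduce via Theorem~\ref{irrpencil} to the case of no irrational pencil of genus $\ge 2$, then analyze the canonical map case by case on $d=\deg\fie$---but several of the individual steps rest on an inadequate bound or a false claim.

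The assertion that an even $d$ makes $\fie$ factor through a double cover is simply wrong: a generically finite degree-$4$ map need not factor through any degree-$2$ map, so for $d=4$ there is no involution to invoke. The paper treats $d=2$ and $d=4$ by entirely different mechanisms. For $d=2$ there \emph{is} an involution $\iota$; one first shows via Beauville's theorem and the Castelnuovo inequality that $p_g(\Si)=0$ whenever $d>1$, then observes that the $(-1)$-eigenspace $V\subset H^0(\Omega^1_S)$ has dimension $\ge q-1\ge 4$, so wedges from $V$ descend to $2$-forms on $S/\iota$ and hence vanish, and Castelnuovo--De~Franchis produces an irrational pencil of genus $\ge 2$---a contradiction, so $d=2$ does not occur at all. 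For $d=4$ one needs instead a sharper degree bound: Xiao's theorem \cite{xiaoirreg} says that for $q\ge 5$ the canonical image $\Si$ is not ruled by lines, hence $\deg\Si\ge 2(p_g-2)=2\chi$, not merely the $p_g-2=2q-5$ you use. With this, $d=4$ gives $K_S^2\ge M^2\ge 8\chi$ and $d\ge 5$ contradicts Miyaoka--Yau; your bound $4(2q-5)=8\chi-4$ is not enough for (ii).

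The ``refining by controlling $K_S\cdot F$ via the Albanese structure'' does not do the work for $d=1$ or $d=3$. For $d=1$ the paper quotes Debarre's explicit inequality \cite[Th\'eor\`eme 3.2]{deb1}, $K_S^2\ge 3p_g+q-7+K_SZ+\tfrac12 DZ$, which with $2$-connectedness of canonical divisors gives $K_S^2\ge 7\chi-1$ at once. For $d=3$, once $\Si$ is not ruled by lines, Reid's bound \cite[Addendum (3.4)]{smalldeg} gives $\deg\Si\ge \tfrac{4}{3}(n-2)$ in $\pp^n$, hence $M^2\ge 3\cdot\tfrac{4}{3}(2\chi-2)=8\chi-8$ and, by $2$-connectedness, $K_S^2\ge 8\chi-6$. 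This already yields (iii), and (i) for $\chi\ge 5$; the residual cases $\chi=3,4$ for (i) are handled by a short ad hoc argument (for $\chi=4$ one shows $\Si$ would be a degree-$8$ del Pezzo and derives a contradiction from its conic pencil via \cite{xiaoirreg}). None of this uses the Albanese fibration, slope inequalities, or any classification of triple covers.
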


Finally, we analyze the Albanese map:
\begin{prop}\label{albmap} Let $q\ge 5$ be an integer and let  $S$ be a minimal complex surface of general type with $q(S)=q$ and $p_g(S)=2q-3$. Let   $\al\colon S\to A$ be the Albanese map.
\begin{enumerate}
\item if $S$ has an irrational pencil of genus $\ge 2$ and it  is not the product  of two curves of genus 3, then $\al$ is 2-to-1 onto its image;
\item if $S$  has no irrational pencil of genus $\ge 2$ or it is the product of two curves of genus 3, then $\al$ is birational  onto its image.
\end{enumerate}
 \end{prop}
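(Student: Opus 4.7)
The plan is to split the argument according to the dichotomy in Theorem~\ref{irrpencil}. For part~(1), with $S$ irregular and not a product of two curves of genus $3$, Theorem~\ref{irrpencil}(ii) gives $S=(C\times F)/\Z_2$ with $g(C)=2q-3$, $g(F)=2$, $\Z_2$ acting freely on $C$ and diagonally on $C\times F$, and $E:=F/\Z_2$ elliptic. Setting $B:=C/\Z_2$ (of genus $q-1$), passing to the double quotient $(C\times F)/(\Z_2\times\Z_2)=B\times E$ yields a natural degree-$2$ morphism $\phi\colon S\to B\times E$. Invariant $1$-forms give $H^0(\Omega^1_S)=H^0(\Omega^1_B)\oplus H^0(\Omega^1_E)$, so $\phi^*$ is an isomorphism on $H^0(\Omega^1)$. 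Consequently $\phi$ composed with the Abel--Jacobi embedding $B\times E\hookrightarrow J(B)\times E$ realises the Albanese map of $S$ up to translation, and since the Abel--Jacobi map is a closed embedding on each factor, $\alpha$ is $2$-to-$1$ onto its image.

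For part~(2), when $S=C_1\times C_2$ with $g(C_i)=3$ the Albanese map is the product of the two Abel--Jacobi embeddings, hence itself an embedding. So assume $S$ has no irrational pencil of genus $\geq 2$; the goal is to prove $\alpha$ birational onto its image. A one-dimensional Albanese image would yield an irrational pencil of genus $q\geq 5$, so $Y:=\alpha(S)$ is a surface and $\alpha$ is generically finite of some degree $d$; I must rule out $d\geq 2$. My plan is to pass to a smooth model $\tilde Y\to Y$ and the induced map $f\colon S\to\tilde Y$. Then $\Alb(\tilde Y)=A$, so $q(\tilde Y)=q$, and any irrational pencil on $\tilde Y$ of genus $\geq 2$ would pull back along $f$ to one on $S$, so $\tilde Y$ inherits the absence of such pencils. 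Pulling $2$-forms back along $f$ gives $p_g(\tilde Y)\leq p_g(S)=2q-3$, while the bound $p_g\geq 2q-4$ of \cite{appendix}, whose equality case is excluded by the no-pencil assumption on $\tilde Y$, gives $p_g(\tilde Y)\geq 2q-3$. Hence $p_g(\tilde Y)=p_g(S)$, and the trace decomposition $f_*\omega_S=\omega_{\tilde Y}\oplus\Gc$ coming from Grothendieck duality forces $H^0(\Gc)=0$.

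The main obstacle is the final step of part~(2): upgrading the vanishing $H^0(\Gc)=0$ on $\tilde Y$ to $\Gc=0$, and therefore to $d=1$. My plan is to push everything forward to $A$ along the closed immersion $\tilde\alpha\colon\tilde Y\hookrightarrow A$: then $\alpha_*\omega_S$ and its summand $\tilde\alpha_*\Gc$ are $GV$-sheaves on $A$ by Hacon's generic vanishing theorem. A non-zero $GV$-sheaf on $A$ can have vanishing $H^0$, so the vanishing alone is insufficient; the strategy is to combine it with the rigidity of the Albanese embedding of $\tilde Y$ and with the numerical constraints on $S$ given by Theorem~\ref{inequality} (in particular $K_S^2\geq 7\chi(S)-1$) to rule out a nontrivial summand $\tilde\alpha_*\Gc$.
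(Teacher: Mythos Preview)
Your treatment of part~(i) and the setup of part~(ii) (establishing $q(\tilde Y)=q$ and $p_g(\tilde Y)=2q-3$) are essentially the same as the paper's. The gap is in the final step of part~(ii): you try to upgrade $H^0(\Gc)=0$ to $\Gc=0$ via generic vanishing, but you do not actually carry this out, and as you yourself note, a nonzero $GV$-sheaf can have vanishing $H^0$. The vague appeal to ``rigidity of the Albanese embedding of $\tilde Y$'' and to the inequality $K_S^2\ge 7\chi(S)-1$ does not constitute an argument; nothing you have written forces $\Gc=0$ or $d=1$.

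The paper avoids this difficulty entirely by exploiting the equality $p_g(\tilde Y)=p_g(S)$ in a different way. Since $f^*\colon H^0(K_{\tilde Y})\to H^0(K_S)$ is injective and both spaces have the same dimension, it is an isomorphism: the canonical system of $S$ is pulled back from $\tilde Y$, and hence the canonical map $\fie_S$ factors as $S\xrightarrow{f}\tilde Y\xrightarrow{\fie_{\tilde Y}}\Si$. If $d=\deg f\ge 2$ then $\fie_S$ is not birational, so by Step~3 of the proof of Theorem~\ref{inequality} the common canonical image $\Si$ has $p_g(\Si)=0$; in particular $\fie_{\tilde Y}$ is not birational either. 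But $\tilde Y$ (or its minimal model) is itself a surface with $q=q(\tilde Y)\ge 5$, $p_g=2q-3$, and no irrational pencil of genus $\ge 2$, so Step~4 of that same proof applies to $\tilde Y$ and rules out $\deg\fie_{\tilde Y}=2$. Hence $\deg\fie_{\tilde Y}\ge 3$, giving $\deg\fie_S\ge 2\cdot 3=6$, which contradicts Step~5 ($\deg\fie_S\le 4$). This closes the argument without any sheaf-theoretic input beyond what you already established.
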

 \medskip

 In view of the  discussion and results above, it is natural to ask some questions:
 
 {\bf Question 1.} {\em Is the inequality $K^2\ge 8\chi$ true for all surfaces with $p_g=2q-3$ and $q\ge 4$?}
 \smallskip
 
 {\bf Question 2.} {\em  Are there any surfaces with $p_g=2q-3$ and $q\ge 4$ which have no irrational pencil of genus $\ge 2$?}
 \smallskip
 
 Although we have no further evidence,  we believe that the answer to Question 2 should be No and therefore, in view of Theorem \ref{irrpencil}, the answer to Question 1 should be Yes.
\medskip

\noindent{\it Acknowledgement:} We wish to thank the referee for his careful reading of the paper and for suggesting a simplification of the proofs of Theorem 1.2 and Proposition 1.3.

\subsection{Notation and assumptions}
Throughout all the paper $S$ denotes  a minimal complex surface of general type such that $p_g(S)=2q-3$, where $q:=q(S)$ is the irregularity. Notice that $\chi(S)=q-2$.  We also assume $q\ge 5$.
We denote by $\fie\colon S\to \pp^{2q-4}$ the canonical map, by $A$ the Albanese variety  and by $\al\colon S\to A$ the Albanese map.
An {\em irrational pencil} of $S$ of genus $b>0$  is a fibration $f\colon S\to B$ where $B$ is a curve of genus $b$.

\section{Proof of Theorem \ref{irrpencil}}
In this section  we prove  Theorem \ref{irrpencil}.  We assume that there exists a pencil $f\colon S\to B$ where $B$ is a curve of genus $b\ge 2$ and we denote  by $g\ge 2$ the genus of a general fibre of $S$. 
\begin{proof} If $S$ is the product of two curves, then the computation of the invariants of $S$ shows that we are in case (i). So we assume from now on that $S$ is not a product. 

By the Lemme on p. 344 of  \cite{appendix}, we have:
\begin{equation}\label{b+g}
 q<b+g,
\end{equation}
  and so $q-2< (b-1)+(g-1)$. By  the Corollaire on p. 343 of   \cite{appendix}, we have:
\begin{equation}\label{chi}
q-2=\chi(S)\ge (b-1)(g-1),
\end{equation}
and so
$ (b-1)+(g-1)> (b-1)(g-1)$. The last inequality holds if and only if either $b=2$ or $g=2$.

Suppose $b=2$. Then the inequalities \eqref{b+g} and \eqref{chi}  give  $g=q-1$ and one has equality in \eqref{chi}. Hence by the Corollaire on p. 343 of   \cite{appendix}
 the fibration $f$ is isotrivial with every fibre smooth, namely, in the terminology of \cite{serrano},
 $f$  is a {\em quasi-bundle}. We denote by $F$ the fibre of $f$.  By  \S 1 of \cite{serrano} (cf. also \cite{serranoC})
there exist a curve
$C$ and a finite group $G$ that acts faithfully on $C$ and on $F$ in the following way:

 1)  the
diagonal action on $C\times F$ is free and 
$S$ is isomorphic to
$(C\times F)/G$;

2) $C/G$ is isomorphic to $B$ and $f$ is induced by the projection $C\times F\to C$;

3) $F/G$  has genus $q-2$.

Since $F$ has genus $q-1$ and  $q-2\ge 3$ by assumption, condition 3) contradicts the Hurwitz formula (an  example of this type  with $q=4$ is given in \cite[\S 7]{bnp}). Hence $b=2$ does not occur. 

Suppose now $g=2$. The same reasoning as above gives  $b=q-1$.
As in the previous case, by the results of Serrano there exist a curve $F$ of genus $2$, a curve $C$ and a finite group $G$ that acts on $C$ and $F$ in such a way that: 

1) the diagonal  action of $G$ on $C\times F$ is free and $S$ is isomorphic to the quotient surface $(C\times F)/G$;

2) $C/G$ is isomorphic to $B$  and $f$ is induced by the projection $C\times F\to C$;

3) $F/G$ has genus $1$.
\smallskip

Let  $d$ denote the order of $G$. The Hurwitz formula applied to the quotient map $F\to F/G$ gives:
$$ \frac{2}{d}=\sum_{1}^k(1-\frac{1}{m_i}), $$
where $m_1, \dots m_k$ are the order of the stabilizers of the special orbits of $G$ on $F$. It is easy to check that the possibilities are the following:
\begin{eqnarray} d=4, k=1, m_1=2;\\ \nonumber
d=3, k=1, m_1=3;\\ \nonumber
d=2, k=2, m_1=m_2=2.
\end{eqnarray}
In particular, the group $G$ is abelian. We are going to exclude the first two possibilities by using the fundamental relations of \cite[Prop. 2.1]{ritaabel}. More precisely, denote by $E$ the quotient curve $P\in E$ the only branch point of the map $F\to F/G=E$. If $G=\Z_4$, then by \cite[Prop. 2.1]{ritaabel}, there exist $L\in \Pic(E)$ such that:
$$4L\equiv 2P,$$ 
which is impossible by degree reasons. If $G=\Z_2\times \Z_2$, then by \cite[Prop. 2.1]{ritaabel}, there exist $L\in \Pic(E)$ such that:
$$2L\equiv P,$$ 
which is again impossible by degree reasons.
Finally, if $G=\Z_3$, then by \cite[Prop. 2.1]{ritaabel} there exist $L\in \Pic(E)$ such that:
$$3L\equiv P,$$ 
and we have again a contradiction.
So we have  $G=\Z_2$. Since the diagonal action on $C\times F$ is free, the group $\Z_2$ acts freely on $C$. Hence   $C$ has genus $2q-3$ and it is   easy to check that the surface $(C\times F)/G$  has the right invariants.
\end{proof}

\section{Proofs of Theorem \ref{inequality} and Proposition \ref{albmap}}

Since the proofs of Theorem \ref{inequality} and Proposition \ref{albmap} are very similar, we give  both  in the same section. 
\smallskip
\begin{proof}[Proof of Theorem \ref{inequality}]
If $S$ has an irrational pencil of genus $\ge 2$, then $K^2_S=8\chi(S)$ by Theorem \ref{irrpencil}.
Hence we may assume that $S$ has no such pencil. We write $K_S=|D|+Z$, where $Z$ is the fixed part. By \cite{xiaopencil},   a surface whose canonical image is a curve has irregularity at most $2$, hence the image of the canonical map  of $S$  is a surface    and the system $|D|$ is irreducible.
\medskip

\underline{Step 1:} {\em If $Z=0$, then $K^2_S\ge 8\chi(S)$.}\newline 
Consider  the natural map $v\colon \bigwedge^2H^0(\Omega^1_S)\to H^0(K_S)$ and denote by $\bar{v}\colon \pp(\bigwedge^2 H^0(\Omega^1_S))\to \pp(H^0(K_S))$  the corresponding rational map of projectives spaces.  By the Castelnuovo--De Franchis Theorem, the kernel of $v$ does not contain any non zero simple tensor $\eta_1\wedge \eta_2$.  Hence, denoting by $G$  the Grassmannian of lines in $\pp(H^0(\Omega^1_S))$,   $\bar{v}$ restricts to a morphism $G\to |K_S|$ which is finite onto its image, and therefore  surjective,  since $G$ has dimension $2q-4$. 

  So every $\si\in H^0(K_S)$  is of the form $\eta_1\wedge \eta_2$ for $\eta_1, \eta_2\in H^0(\Omega^1_S)$,  and for $\si$ general there exist also $\eta_3, \eta_4$ such that $\eta_1, \dots \eta_4$ are independent and $\si=\eta_3\wedge \eta_4$. Following \cite{bnp}, we say that  $S$ is {\em generalized Lagrangian}. 
  Given $\si$ and $\eta_1, \dots \eta_4$ as above, one considers the subsystem $\Wc$ of $|K_S|$ generated by the divisors of zeros of the $2$-forms $\eta_i\wedge\eta_j$, $1\le i<j\le 4$.
   Since $|K_S|$ is irreducible and $\Wc$ contains the divisor of zeros of the general form $\si$,  the system $\Wc$ has no fixed part.
Then we have $K^2_S\ge 8\chi(S)$, by \cite[Thm.1.2]{bnp}.
\medskip

By Step 1, from now we may assume $Z\neq 0$.
  We  analyze the behaviour of the canonical map, obtaining inequalities  for each possible case.
We denote by $d$  the degree of the canonical map and by $\Si$ the canonical image.
\medskip 

\underline{Step 2:} {\em If $d=1$, then $K^2_S\ge 7\chi(S)-1$.}\newline
By Th\'eor\`eme 3.2 and Remarque 3.3 of \cite{deb1} we have:
$$K^2_S\geq 3p_g(S)+q-7+K_SZ+\frac{1}{2}DZ,$$ and in our case this can be rewritten as:

$$K^2_S\geq 7q-16+K_SZ+\frac{1}{2}DZ=7\chi(S)-2+K_SZ+\frac{1}{2}DZ\geq 7\chi(S)-1$$ where the last inequality is a consequence of the 2-connectedness of canonical curves.
\medskip

\underline{Step 3:} {\em If $d>1$, then $p_g(\Si)=0$.}\newline
By \cite[Thm. 3.1]{beauville}, if $p_g(\Si)>0$ then $p_g(\Si)=p_g(S)=2q-3$ and $\Si$ is the canonical image of a smooth minimal surface  of general type. Hence by the Castelnuovo inequality we have $\deg\Si\ge 3p_g(S)-7$. If $\fie$ is not birational, this gives: 
$$K_S^2\geq 6p_g(S)-14=12q-32=12\chi(S)-8. $$ 
Since by assumption $\chi(S)=q-2\ge 3$,   the above inequality contradicts the Bogomolov--Miyaoka-Yau inequality $K^2\le 9\chi$. This proves that if $p_g(\Si)>0$ the map $\fie$ is birational.
\medskip

\underline{Step 4:} {\em The case $d=2$ does not occur.}\newline
Assume that $d=2$ and denote by $\iota$ the involution of $S$ induced by $\fie$. Since $S$ has no irrational pencil of genus $\ge 2$, the irregularity of $S/\iota$ is at most 1. It follows that the subspace $V\subseteq H^0(\Omega^1_S)$ on which  $\iota$ acts as multiplication by $-1$ has dimension $\ge q-1\ge 4$. For any $\eta_1, \eta_2\in V$, the $2$-form $\eta_1\wedge \eta_2$ is invariant under $\iota$, hence it induces a global $2$-form on $S/\iota$. By Step 3, this $2$-form is identically zero.  Hence $\eta_1\wedge \eta_2$ vanishes identically  on $S$  and,  by the Castelnuovo--De Franchis Theorem,  $S$ has an irrational pencil of genus $\ge 2$, against the assumptions.
 \medskip

\underline{Step 5:} {\em $d\le 4$ and if $d=4$, $K_S^2\geq 8\chi(S)$.}\newline 
By Theorem 3 of \cite{xiaoirreg} and the assumption $q\geq 5$,  the image $\Si$ of the canonical map is not ruled by lines.  Hence the degree $m$ of $\Si\subset  \pp^{2q-4}$  satisfies $m\geq 2q-4=2\chi(S)$. Thus $d\geq 5$ would yield a contradiction to the Miyaoka-Yau inequality, whilst $d=4$ yields $K_S^2\geq 8\chi(S)$.
\medskip 

\underline{Step 6:} {\em If $d=3$, then $K^2_S\ge 8\chi(S)-6$. }\newline 
By Theorem 3 of \cite{xiaoirreg} and the assumption $q\geq 5$,  the image $\Si$ of the canonical map is not ruled by lines and so,  by  \cite[(3.4) Addendum]{smalldeg},  the degree $m$ of $\Si \subset \pp^n$  must satisfy  $$m\geq \frac{4}{3}(n- 2).$$
Hence we have:
$$\deg\Si \ge  \frac{4}{3}(2\chi(S)- 2), $$
and so 
$D^2\geq 8\chi(S)-8$. Thus, again by  2-connectedness of the canonical divisors, $K^2_S\geq 8\chi(S)-6$.

\medskip
\underline{Step 7:} {\em If $d=3$ then $K_S^2\geq 7\chi(S)-1$. }\newline 
 By Step 6 we need only to show the inequality for $\chi(S)=3$ and for $\chi(S)=4$. 
 As in Step 6, the canonical image $\Si$ is not ruled by lines. So we have $\deg\Si\ge 6$ for $\chi(S)=3$ and $\deg\Si\ge 8$ for $\chi(S)=4$. 
 
 For $\chi(S)=3$ , we have $D^2\ge 3\deg\Si\ge 18$,  yielding $K_S^2\geq 20=7\chi(S)-1$.
\smallskip 

Assume  $\chi(S)=4$ and suppose for contradiction  that $K_S^2\leq 26$.
 We have:
 $$24\ge D^2\ge 3\deg\Si\ge 24.$$
 It follows that $\deg\Si=8$, $D^2=24$, $K_SD=26$ and the system $|D|$ is free.
 Since the surface $\Si$ is not ruled by lines, it is a (weak) Del Pezzo surface, i.e.,  it is the anticanonical  image of $\pp^1\times\pp^1$, $\F_1$ or $\F_2$. So $\Si$ contains a pencil of conics, whose pull back to $S$ we denote by $|G|$.
 
 Then  we can write $D=2G+H$, where $H$ is effective, $G D=6$ and $h^0(S,G)\geq 2$. The index theorem gives $G^2\leq 1$. On the other hand, by \cite{xiaoirreg}, $K_SG+G^2\geq  4q-4=20$ and this contradicts $K_SD=26$. 
\end{proof}
 \medskip

 \begin{proof}[Proof of Proposition \ref{albmap}]  
 If $S$ is the product of two curves of genus 3 then $\al$ is of course an embedding. Also, it is  easy to check statement (i) using Theorem \ref{irrpencil}. 
 
 So assume that $S$ has no irrational pencil of genus $\ge 2$ and assume for contradiction that $\al$ is not birational.
Denote the  image of $\al$ by $Y$. Recall that a subvariety of an abelian variety is of general type if and only if it is not ruled by tori. Since $S$ has no irrational pencil of genus $\ge 2$, it follows that $Y$ is of general type and has no irrational pencil of genus $\ge 2$, either.  We have $q(Y)\ge q$, since $Y$ generates $A$, and $q(Y)\le q$, $p_g(Y)\le p_g(S)=2q-3$, since $S$ dominates $Y$.      
On the other hand, the Th\'eor\`eme on p. 345 of \cite{appendix} gives $p_g(Y)\ge 2q(Y)-3$. Summing up, we have $q(Y)=q$ and $p_g(Y)=2q-3$.

The canonical map of $S$ factors through the canonical map of $Y$, hence it is not birational. By Step 3 of the proof of Theorem \ref{inequality}, the canonical map of $Y$ is not birational either and by Step 4 of the same proof, the canonical map of $Y$ has degree $\ge 3$. Hence the canonical map of $S$ has degree at least 6, contradicting Step 5 of the proof of Theorem \ref{inequality}.
\end{proof}

\bigskip

\bigskip

\begin{minipage}{13cm}
\parbox[t]{6.5cm}{Margarida Mendes Lopes\\
Departamento de  Matem\'atica\\
Instituto Superior T\'ecnico\\
Universidade T{\'e}cnica de Lisboa\\
Av.~Rovisco Pais\\
1049-001 Lisboa, PORTUGAL\\
mmlopes@math.ist.utl.pt
  } \hfill
\parbox[t]{5.5cm}{Rita Pardini\\
Dipartimento di Matematica\\
Universit\`a di Pisa\\
Largo B. Pontecorvo, 5\\
56127 Pisa, Italy\\
pardini@dm.unipi.it}
\end{minipage}

\end{document}